\setlist[enumerate,2]{
  ref=\alph*,
}
\numberwithin{equation}{section}
\theoremstyle{plain}
\newtheorem{thm}[equation]{Theorem}
\newtheorem{prop}[equation]{Proposition}
\newtheorem{lem}[equation]{Lemma}
\newtheorem*{claim*}{Claim}
\theoremstyle{definition}
\newtheorem{defn}[equation]{Definition}
\newtheorem{notation}[equation]{Notation}
\theoremstyle{remark}
\newtheorem{rem}[equation]{Remark}
\author[R.\ Murakami]{Rei Murakami}
\address{Mathematical Institute, Tohoku University, 6-3, Aramaki Aza-Aoba, Aoba-ku, Sendai 980-8578, Japan}
\email{rei.murakami.p3@dc.tohoku.ac.jp, reimurakami66@gmail.com}
\begin{document}

\title[An analytic proof of Griffiths' conjecture on compact Riemann surfaces]{An analytic proof of Griffiths' conjecture on compact Riemann surfaces}

\begin{abstract}
    Griffiths' conjecture asserts that a holomorphic vector bundle is ample if and only if it admits a Hermitian metric with positive curvature.
    In this paper, we present a new proof of this conjecture on compact Riemann surfaces using a system of PDEs introduced by Demailly. 
    Our argument combines techniques developed by Uhlenbeck–Yau for Hermitian–Einstein metrics with Pingali's reduction of the problem to an a priori estimate.
\end{abstract}

\maketitle

\section{Introduction}
Given a holomorphic vector bundle $E$ on a compact complex manifold $X$, Griffiths conjectured that differential-geometric and algebro-geometric notions of positivity of $E$ are equivalent. 
The differential-geometric notion of positivity is called \textit{Griffiths positivity} and defined in terms of the Chern curvature.
Explicitly, the curvature $\sqrt{-1}F_h$ of a Hermitian metric $h$ is Griffiths positive at a point $p \in X$ if
$$\sqrt{-1}(F_h)_{i\bar{j}\,\alpha\bar{\beta}}\, \xi^i \bar{\xi}^j v^\alpha \bar{v}^\beta > 0$$
for all nonzero $\xi \in E_p$ and  $v \in T^{1,0}_pX$.
The algebro-geometric notion of positivity for $E$ is defined via the ampleness of the tautological line bundle $\mathcal{O}_{\mathbb{P}(E^*)}(1)$.
The implication from Griffiths positivity to ampleness is well known, and for line bundles the equivalence itself follows from the Kodaira embedding theorem.

In his recent work \cite{Dem21}, Demailly proposed a new analytic approach to this conjecture. He introduced a system of PDEs, which we refer to as \textit{the Demailly system}, and conjectured that the existence of solutions characterizes ampleness in terms of Griffiths positivity. 
This approach has already been tested in the case of the vortex bundle \cite{Man23}.

In this paper, we study the Demailly system on Riemann surfaces. Although Griffiths' conjecture was already settled in this case by \cite{Ume, CF} and \cite[Theorem 1.10]{LZZ}, we give a new proof using Demailly's analytic framework.
Given a reference Hermitian metric $h_{\mathrm{ref}}$ on $E$, we write any Hermitian metric $h$ as $h=e^{-f}g h_\mathrm{ref}$, where $f$ is a smooth function and $g \in \Gamma(\mathrm{End}\,E)$ is a positive Hermitian matrix with respect to $h_\mathrm{ref}$ satisfying $\det g=1$. 
Then, the Demailly system on Riemann surfaces takes the form
\begin{equation}\label{eq: Dem}
    \begin{cases}
        \det\big(\sqrt{-1}F_h/\omega_0+(1-t)\alpha\big)=e^{\lambda f} a_0 \\
        \Lambda_{\omega_0}\sqrt{-1}F_h^\circ=-e^f \ln g,
    \end{cases}
\end{equation}
where a positive number $\alpha$, a smooth positive function $a_0\in C^\infty(X)$, and a Kähler form $\omega_0$ are chosen so that the system admits a solution $h_0$ at $t=0$ with $f_0=0$ and $\sqrt{-1}F_{h_0}/\omega_0+\alpha>0$ (see the proof of \cite[Theorem 3.1]{UY}). Here, we denote by $\sqrt{-1}F_h^\circ$ the trace-free part of $\sqrt{-1}F_h$. Also, hereafter, we suppress $\otimes\mathrm{Id}_E$, so $(1-t)\alpha$ in \eqref{eq: Dem} denotes $(1-t)\alpha \otimes\mathrm{Id}_E$.
The constants $\lambda$ and $\alpha$ are chosen to ensure local uniqueness at $t=0$ \cite[Propositions 2.1 and 2.2]{Pin23}. The first equation of \eqref{eq: Dem} is formulated to detect the positivity of the matrix on the left-hand side. The second equation is attached to overcome the undeterminancy of the equation. The specific form is chosen to use the study of the (cushioned) Hermitian-Einstein equation in \cite{UY}.

Our main result is the following:

\begin{thm}\label{thm: main}
    Let $X$ be a compact Riemann surface and $E$ a holomorphic vector bundle. Then, the following are equivalent:
    \begin{enumerate}[font=\normalfont]
        \item\label{item: solv} The Demailly system \eqref{eq: Dem} is solvable at $t=1$ with a Hermitian metric whose curvature is Griffiths positive.
        \item\label{item: Gposi} $E$ admits a Hermitian metric whose curvature is Griffiths positive.
        \item\label{item: ample} $E$ is ample.
    \end{enumerate}
\end{thm}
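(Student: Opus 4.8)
The plan is to prove a cycle of implications among the three conditions, organized so that the analytic heart of the paper---solvability of the Demailly system---connects to the classical geometric and algebraic notions through relatively soft arguments, while the genuinely hard work is isolated in a single continuity-method estimate. I would establish \eqref{item: ample} $\Rightarrow$ \eqref{item: solv} $\Rightarrow$ \eqref{item: Gposi} $\Rightarrow$ \eqref{item: ample}, since the last two implications are where standard theory can be invoked and the first is where Demailly's framework does the new work.

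\medskip
\noindent\textit{From a solution to Griffiths positivity.} Assuming \eqref{item: solv}, at $t=1$ the first equation of \eqref{eq: Dem} reads $\det\big(\sqrt{-1}F_h/\omega_0\big)=e^{\lambda f}a_0>0$, which forces the two eigenvalues of the endomorphism $\sqrt{-1}F_h/\omega_0$ to have the same sign at every point. Continuity in $t\in[0,1]$, together with the normalization at $t=0$ where $\sqrt{-1}F_{h_0}/\omega_0+\alpha>0$, should let me track the sign of the trace (equivalently $\Lambda_{\omega_0}\sqrt{-1}F_h$) and rule out the possibility that both eigenvalues are negative. The point is that the positivity $\sqrt{-1}F_h/\omega_0>0$ cannot degenerate along the path without the determinant vanishing, contradicting $e^{\lambda f}a_0>0$; so the open-and-closed sign condition propagates from $t=0$ to $t=1$. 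Once $\sqrt{-1}F_h/\omega_0$ is positive-definite as an endomorphism, on a Riemann surface ($\dim_{\mathbb C}X=1$) this is exactly Griffiths positivity of the curvature, giving \eqref{item: Gposi}.

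\medskip
\noindent\textit{From Griffiths positivity to ampleness.} This is the classical, well-known implication alluded to in the introduction, valid on any compact complex manifold: Griffiths positivity of $\sqrt{-1}F_h$ induces a positively curved metric on the tautological bundle $\mathcal{O}_{\mathbb{P}(E^*)}(1)$, and by the Kodaira embedding theorem this line bundle is ample, which is the definition of ampleness for $E$. I would simply cite this, as the excerpt already flags it as standard.

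\medskip
\noindent\textit{From ampleness to solvability.} This is the main analytic content and the hardest step. Here I would run Demailly's continuity method in $t$: let $S\subseteq[0,1]$ be the set of $t$ for which \eqref{eq: Dem} is solvable. By construction $0\in S$, and the uniqueness/non-degeneracy at $t=0$ (via the choice of $\lambda,\alpha$ from \cite[Propositions 2.1 and 2.2]{Pin23}) together with the elliptic nature of the linearized system shows $S$ is open by the implicit function theorem. The decisive part is closedness: I must derive a priori $C^0$ (and then higher-order) estimates on $(f,g)$ uniform in $t$, preserving the positivity constraint $\sqrt{-1}F_h/\omega_0+(1-t)\alpha>0$. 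This is where I expect the main obstacle, and where I would combine Pingali's reduction of closedness to a single a priori estimate with the Uhlenbeck--Yau techniques for the (cushioned) Hermitian--Einstein equation: the second equation of \eqref{eq: Dem} controls the trace-free part of the curvature through $-e^f\ln g$, furnishing the estimate on $g$, while ampleness---providing an a priori Griffiths-positive background metric---supplies the uniform lower bound needed to keep the determinant in the first equation bounded away from zero and hence control $f$. Closing these estimates so that the limiting solution retains strict positivity, rather than merely semi-positivity, is the crux of the argument.
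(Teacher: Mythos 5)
Your overall architecture (the cycle \eqref{item: ample}$\Rightarrow$\eqref{item: solv}$\Rightarrow$\eqref{item: Gposi}$\Rightarrow$\eqref{item: ample}) and your treatment of the two soft implications match the paper, modulo the slip that $\sqrt{-1}F_h/\omega_0$ has $r=\operatorname{rank}E$ eigenvalues rather than two; the continuity-of-eigenvalues argument you sketch is indeed how positivity propagates from $t=0$ to $t=1$. The genuine gap is in the mechanism you propose for the hard implication \eqref{item: ample}$\Rightarrow$\eqref{item: solv}. You correctly reduce, following Pingali, to a uniform lower bound on $f$, but your stated source for that bound --- ``ampleness, providing an a priori Griffiths-positive background metric, supplies the uniform lower bound needed to keep the determinant bounded away from zero'' --- does not work as described. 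First, extracting a Griffiths-positive metric from ampleness is precisely the content of the conjecture; invoking Umemura or Campana--Flenner here would make the new analytic proof circular in spirit. Second, and more decisively, even \emph{granting} a Griffiths-positive reference metric, the maximum-principle argument at the minimum of $f-\lambda_{\mathrm{max}}$ only yields $f\ge -\operatorname{osc}\lambda_{\mathrm{max}}-C$ (Lemma \ref{lem: Gposi}), and the required oscillation bound on $\lambda_{\mathrm{max}}$ is only known for direct sums of line bundles (Lemma \ref{lem: osc}); the paper explicitly flags this as the obstruction to running your route for general bundles.

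What the paper actually does is quite different and uses ampleness only through the algebraic fact that every quotient bundle of an ample bundle has positive degree. One first proves the curvature estimate $e^f\lambda_{\mathrm{max}}\le C$ (Proposition \ref{prop: keyest}), which gives $|\Delta f|\le C$ and hence controls $\operatorname{osc} f$. Then, arguing by contradiction, if $f_i(p_i)\to-\infty$ along a sequence of times, one follows Uhlenbeck--Yau: the normalized endomorphisms $\tilde g^{\sigma}_i=e^{-\sigma m_i}g^{\sigma}_i$ converge weakly in $L^2_1$ to a weakly holomorphic subbundle $\pi$, which on a curve is an honest holomorphic subbundle $F\subsetneq E$; a careful limit of the Chern--Weil integrand, using the first equation of \eqref{eq: Dem'} and the decay $e^{f_i}\to 0$, shows $\deg(E/F)\le 0$, contradicting ampleness. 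This degeneration-to-a-destabilizing-quotient step is the missing idea in your proposal, and without it (or the unproven oscillation bound on $\lambda_{\mathrm{max}}$) the a priori lower bound on $f$ is not established.
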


The implication \eqref{item: solv}$\Rightarrow$\eqref{item: Gposi} follows trivially and
the implication \eqref{item: Gposi}$\Rightarrow$\eqref{item: ample} follows easily. Hence, the essential part of the theorem is to show \eqref{item: ample}$\Rightarrow$\eqref{item: solv}.
Pingali \cite{Pin23} previously studied the Demailly system on Riemann surfaces and established Theorem \ref{thm: main} for a sum of ample line bundles. For general vector bundles, using the Leray–Schauder degree theory \cite[Theorem 1.2]{Pin23}, he reduced the problem to an a priori lower bound for $f$, which is the focus of this paper (Proposition \ref{prop: f}).

The paper is organized as follows. In Section \ref{sec: curv}, we establish an a priori estimate for the curvature $\sqrt{-1}F_h$ (Proposition \ref{prop: keyest}). In Section \ref{sec: comp}, using techniques from \cite{UY}, we show that if $f$ were unbounded below, one could construct a quotient sheaf $Q$ with $\deg Q \le 0$, contradicting the ampleness of $E$ (since ampleness implies $\deg Q > 0$ for every quotient $Q$). This yields the desired lower bound for $f$.
Finally, in Section \ref{sec: alt}, for direct sums of line bundles (as in \cite{Pin23}), we provide an alternative proof of \eqref{item: Gposi} $\Rightarrow$ \eqref{item: solv} by obtaining a lower bound for $f$ by the maximum principle, which partially simplifies the proof as discussed in \cite[Remark 3.1]{Pin23}. The restriction to this special case is due to the lack of Lemma \ref{lem: osc} for general vector bundles.
We end this paper with some remarks on further study in Section \ref{sec: rem}.

\subsection*{Acknowledgements}
The author thanks his supervisor, Shin-ichi Matsumura, for introducing him to the topic of this paper and for his constant support. He is also grateful to Satoshi Jinnouchi for introducing him to the book \cite{LT}, to Kuang-Ru Wu for giving valuable comments on the first version of this paper, and to Vamsi Pingali for kindly answering his questions about the paper \cite{Pin23} and for giving valuable comments, as well as for pointing out a subtle issue regarding the smoothness of eigenvalues in the first version of this paper. He also thanks the anonymous referee for helpful comments and for introducing him to the paper \cite{LZZ}, and for pointing out a subtle issue regarding the smoothness of eigenvalues in the first version of this paper.
This work was supported by JSPS KAKENHI Grant Number JP24KJ0346.

\section{Proof}
\subsection{Curvature estimates}\label{sec: curv}

In this section, we prove an a priori estimate for the curvature in the Demailly system (Proposition \ref{prop: keyest}). We freely use the following notation: 

\begin{notation}\label{notation}
\begin{itemize}
    \item
    We fix $h_0$ as a reference Hermitian metric on $E$ and denote the covariant derivative and the curvature associated with $h_0$ by $\partial_0$ and $\sqrt{-1}F_0$.
    \item  We denote the eigenvalue functions of $\ln g$ by $\lambda_1\ge\lambda_2\ge\dots\ge\lambda_r$, which are continuous but not necessarily smooth. We also denote by $\lambda_\mathrm{max}$ the largest eigenvalue function $\lambda_1$.
    \item 
    Define the open dense set $W\subset X$ by
    \begin{equation*}
        W:=\{x\in X\mid \text{the multiplicities of $\{\lambda_i\}$ are locally constant at $x$}\}.
    \end{equation*}    
    Given a point $p\in W$, each $\lambda_i$ is smooth around $p$, and we locally diagonalize $\ln g\in \Gamma(\mathrm{End}E)$ on a neighborhood $U$ of $p$ via a local frame $\{s_i\}_{i=1}^r$.
    Moreover, since $g$ is Hermitian with respect to $h_0$, we can choose such a local frame $\{s_i\}_{i=1}^r$ which is also orthonormal with respect to $h_0$ on $U$ (see \cite[Proposition 7.4.5]{LT}).
    \item 
    For a point $p\in X\setminus W$, we locally approximate $g$ by the following way so that the eigenvalues $\{\hat{\lambda}_i\}$ of the approximation $\ln\hat{g}$ are smooth. Choose a local frame $\{\tilde{s}_i\}_{i=1}^r$, orthonormal with respect to $h_0$ at $p$, so that $\ln g$ is diagonalized at $p$. Let $$B:=\sum b_i \tilde{s}_i\otimes \tilde{s}^i,$$ where $b_i\in C^\infty$ satisfy $0\ge b_1> b_2>\dots > b_r$ and $b_1(p)=0$ and are sufficiently small. Define $$\ln \hat{g}=\ln g+B.$$ Then the eigenvalues $\{\hat{\lambda}_i\}_{i=1}^r$ of $\ln \hat{g}$ are distinct at $p$. Hence we diagonalize $\ln \hat{g}$ on a neighborhood $U$ of $p$ via a local frame $\{\hat{s}_i\}_{i=1}^r$ as before. 
    Since $\ln\hat{g}$ is diagonal at $p$ also with respect to $\{\tilde{s}_i\}$, we may arrange $\hat{s}_i(p)=\tilde{s}_i(p)$. Also, by definition, we have $\lambda_1(p)=\hat{\lambda}_1(p)$ and $\lambda_i\ge\hat{\lambda}_i$. Letting $b_i\to 0$ in $C^\infty(U)$ yields $\hat{g}\to g$ in $C^\infty(U)$.
    
    \item 
    The Demailly system \eqref{eq: Dem} is expressed as
    \begin{equation}\label{eq: Dem'}
    \begin{cases}
        \det \left(\dfrac{\beta}{r}+\Delta f-e^f\ln g+(1-t)\alpha\right)=e^{\lambda f}a_0,\\
        \Lambda_{\omega_0}\sqrt{-1}F_h^\circ=-e^f \ln g,
    \end{cases}
    \end{equation}
    where $\beta$ denotes $\Lambda_{\omega_0}\mathrm{tr}\, \sqrt{-1}F_0$ and $\Delta$ denotes the Laplacian associated to $\omega_0$. 
    Here, we suppress $\otimes\mathrm{Id}_E$, so $\frac{\beta}{r}+\Delta f+(1-t)\alpha$ denotes $(\frac{\beta}{r}+\Delta f+(1-t)\alpha)\otimes\mathrm{Id}_E$.

    \item In this paper, a positive constant $C$ denotes a constant which is independent of $t\in[0,1]$, and may vary from line to line.
\end{itemize}
\end{notation}

With the localization on $U$ stated in Notation \ref{notation}, we have
\begin{align*}
    g=\sum_{i=1}^r e^{\lambda_i}\, s_i\otimes s^i
\end{align*}
on a neighborhood $U$ of $p\in W$. Define a $(1,0)$-form $A^{(1,0)}$ and a $(0,1)$-form $A^{(0,1)}$ by
\begin{align*}
    \partial_0 s_i=\big(A^{(1,0)}\big)^j_i s_j, \quad \bar{\partial} s_i=\big(A^{(0,1)}\big)^j_i s_j,
\end{align*}
where $\partial_0$ is the $(1,0)$-part of the covariant derivative associated with the reference Hermitian metric $h_0$. Here we adopt the Einstein summation convention and omit the summation symbol.

The following lemma follows from straightforward computations, but since we could not find it in the literature, we record it here for completeness.
\begin{lem}\label{lem: laplace}
    We have
    \begin{equation}\label{eq: laplace}
    \begin{aligned}
        \Delta\lambda_i=-\big(\Lambda_{\omega_0}\sqrt{-1}\bar{\partial}(g^{-1}\partial_0g)\big)^i_i+\sum_{j=1}^r\big(e^{\lambda_i-\lambda_j}-1\big)C_{i,j}
    -\sum_{j=1}^r\big(e^{\lambda_j-\lambda_i}-1\big)C_{j,i} \quad \text{on $W$},
    \end{aligned}
    \end{equation}
    where  $C_{i,j}$ are nonnegative functions defined by 
    $$C_{i,j}:=-\Lambda_{\omega_0}\sqrt{-1}\left(\big(A^{(1,0)}\big)^j_i \wedge\big(A^{(0,1)}\big)^i_j\right).$$
\end{lem}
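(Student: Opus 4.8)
The plan is a direct computation in the local $h_0$-orthonormal eigenframe $\{s_i\}$ of Notation \ref{notation}; I will only indicate its structure. First I would record the local shape of $g^{-1}\partial_0 g$. Starting from $g=\sum_i e^{\lambda_i}s_i\otimes s^i$, the induced connection on $\mathrm{End}\,E$ (so that $\partial_0 s^i=-(A^{(1,0)})^i_j s^j$, hence $\partial_0(s_i\otimes s^j)=(A^{(1,0)})^k_i\,s_k\otimes s^j-(A^{(1,0)})^j_k\,s_i\otimes s^k$), and $g^{-1}=\sum_k e^{-\lambda_k}s_k\otimes s^k$, the exponential weights combine to give
\begin{equation*}
    g^{-1}\partial_0 g=\sum_i(\partial_0\lambda_i)\,s_i\otimes s^i+\sum_{i,j}\big(e^{\lambda_i-\lambda_j}-1\big)\big(A^{(1,0)}\big)^j_i\,s_j\otimes s^i ,
\end{equation*}
in which the diagonal entries of the $A$-part cancel, while the off-diagonal ones acquire the weights $e^{\lambda_i-\lambda_j}-1$.

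Next I would apply $\bar\partial$ and extract the $(i,i)$-entry of $\Lambda_{\omega_0}\sqrt{-1}\,\bar\partial(g^{-1}\partial_0 g)$. Writing $\bar\partial(s_j\otimes s^k)=(A^{(0,1)})^l_j\,s_l\otimes s^k-(A^{(0,1)})^k_l\,s_j\otimes s^l$, the mechanism that makes everything collapse is that the $(i,i)$-entry of $\bar\partial(s_j\otimes s^k)$ equals $(A^{(0,1)})^i_j\,\delta^k_i-(A^{(0,1)})^k_i\,\delta^j_i$, which vanishes whenever $j=k$. Hence in $\bar\partial$ of the first sum all the Leibniz terms coming from $\bar\partial(s_i\otimes s^i)$ die and only $\bar\partial\partial_0\lambda_i=\bar\partial\partial\lambda_i$ remains, contributing $-\Delta\lambda_i$ after applying $\Lambda_{\omega_0}\sqrt{-1}$; and in $\bar\partial$ of the second sum the term where $\bar\partial$ hits the coefficient contributes nothing to the $(i,i)$-entry (the coefficient $e^{\lambda_i-\lambda_i}-1$ vanishes), so only the Leibniz corrections survive, and upon contracting them and using $\Lambda_{\omega_0}\sqrt{-1}\big((A^{(1,0)})^j_i\wedge(A^{(0,1)})^i_j\big)=-C_{i,j}$ they produce exactly $\sum_j(e^{\lambda_i-\lambda_j}-1)C_{i,j}-\sum_j(e^{\lambda_j-\lambda_i}-1)C_{j,i}$. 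Collecting these contributions and moving $-\Delta\lambda_i$ to the other side gives \eqref{eq: laplace}.

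For the nonnegativity of the $C_{i,j}$ I would use that, since $\{s_i\}$ is $h_0$-orthonormal and $\partial_0,\bar\partial$ are the two parts of the Chern connection of $h_0$, its connection matrix is skew-Hermitian, i.e.\ $(A^{(0,1)})^i_j=-\overline{(A^{(1,0)})^j_i}$, so $C_{i,j}=\Lambda_{\omega_0}\sqrt{-1}\,(A^{(1,0)})^j_i\wedge\overline{(A^{(1,0)})^j_i}\ge 0$ because $\sqrt{-1}\,\eta\wedge\bar\eta$ is a nonnegative $(1,1)$-form for every $(1,0)$-form $\eta$ and $\Lambda_{\omega_0}$ preserves nonnegativity on a Riemann surface. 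The computation is elementary; the only points needing attention are the index and sign bookkeeping in the middle step — where the vanishing of the $(i,i)$-entry of $\bar\partial(s_i\otimes s^i)$ is exactly what eliminates the unwanted terms — and the understanding that \eqref{eq: laplace} is to be read at points where the eigenvalue functions $\lambda_i$ are smooth, e.g.\ where $\ln g$ has simple spectrum.
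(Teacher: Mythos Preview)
Your proposal is correct and follows essentially the same computation as the paper: both compute $g^{-1}\partial_0 g$ in the $h_0$-orthonormal eigenframe, apply $\bar\partial$, read off the $(i,i)$-entry, and use $(A^{(0,1)})^i_j=-\overline{(A^{(1,0)})^j_i}$ (from $\partial(h_0)_{i\bar j}=0$) for the nonnegativity of $C_{i,j}$. Your explicit isolation of why the $(i,i)$-entry of $\bar\partial(s_j\otimes s^k)$ vanishes for $j=k$, and why the term where $\bar\partial$ hits the scalar coefficient drops out, makes the bookkeeping slightly more transparent than the paper's write-up, but the argument is the same.
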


\begin{proof}
We see that
\begin{align*}
    \partial_0 g=\sum_{i=1}^r \partial \big(e^{\lambda_i}\big) s_i\otimes s^i + \sum_{i\neq j} \big(e^{\lambda_j}-e^{\lambda_i}\big)\big(A^{(1,0)}\big)^i_j s_i\otimes s^j,
\end{align*}
since $\partial_0 s^j=-\big(A^{(1,0)}\big)^j_i s^i$. Consequently, we have
\begin{align*}
    \big(g^{-1}\partial_0  g\big)=\sum_{i,j,k}\big(g^{-1}\big)^i_k\big(\partial_0g\big)^k_j s_i\otimes s^j=\sum_{i=1}^r (\partial\lambda_i) s_i\otimes s^i+\sum_{i\neq j}\big(e^{\lambda_j-\lambda_i}-1\big)\big(A^{(1,0)}\big)^i_j s_i\otimes s^j.
\end{align*}
Therefore, we have
\begin{equation*}
\begin{aligned}
    \left(\Lambda_{\omega_0}\sqrt{-1}\bar{\partial}(g^{-1}\partial_0g)\right)^i_i
    =&- \Delta\lambda_i
    -\sum_{j\neq i}\big(e^{\lambda_i-\lambda_j}-1\big)\Lambda_{\omega_0}\sqrt{-1}\left(\big(A^{(1,0)}\big)^j_i \wedge\big(A^{(0,1)}\big)^i_j\right)\\
    &+\sum_{j\neq i}\big(e^{\lambda_j-\lambda_i}-1\big)\Lambda_{\omega_0}\sqrt{-1}\left(\big(A^{(1,0)}\big)^i_j \wedge\big(A^{(0,1)}\big)^j_i\right).
\end{aligned}
\end{equation*}
Finally, since $\{s_i\}$ is orthonormal with respect to $h_0$ on $U$, we have
\begin{align*}
    0=\partial (h_0)_{i\bar{j}}=\big(A^{(1,0)}\big)^k_i (h_0)_{k\bar{j}}+(h_0)_{i\bar{k}}\overline{\big(A^{(0,1)}\big)^k_j}=\big(A^{(1,0)}\big)^j_i+\overline{\big(A^{(0,1)}\big)^i_j},
\end{align*}
which implies that $C_{i,j}$ are nonnegative.
\end{proof}

\begin{lem}\label{lem: Deltalambda}
    Let $(f,g)$ be a solution of \eqref{eq: Dem}. Then, the function $\lambda_{\mathrm{max}}$ is $C\omega_0$-subharmonic for some constant $C$. In particular, there exists a constant $C$ such that $\fint_X (\sup \lambda_{\mathrm{max}}-\lambda_{\mathrm{max}})\le C$, where $\fint$ denotes the integral with respect to $\omega_0$ divided by the total volume $\int_X\omega_0$.
\end{lem}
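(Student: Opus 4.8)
The plan is to establish the differential inequality $\Delta\lambda_{\mathrm{max}}\ge -C$ in the sense of $C\omega_0$-subharmonicity, and then to deduce the stated integral bound from it by a Green's-function estimate. Note first that $\det g=1$ forces $\sum_i\lambda_i=0$, so $\lambda_{\mathrm{max}}\ge 0$ everywhere. At a point $p$ at which the largest eigenvalue of $\ln g(p)$ is simple, the functions $\lambda_1,\dots,\lambda_r$ are smooth near $p$ with $\lambda_1>\lambda_j$ for $j\ne1$ and $\lambda_{\mathrm{max}}=\lambda_1$; then Lemma \ref{lem: laplace} with $i=1$ applies, and since $\lambda_1\ge\lambda_j$ and the $C_{i,j}$ are nonnegative, the terms $(e^{\lambda_1-\lambda_j}-1)C_{1,j}$ and $-(e^{\lambda_j-\lambda_1}-1)C_{j,1}$ are all nonnegative, whence $\Delta\lambda_{\mathrm{max}}\ge-\big(\Lambda_{\omega_0}\sqrt{-1}\bar\partial(g^{-1}\partial_0g)\big)^1_1$.

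To control the right-hand side I would use the change-of-metric formula $\sqrt{-1}F_h=\sqrt{-1}F_0+\sqrt{-1}\bar\partial(g^{-1}\partial_0g)+\sqrt{-1}\partial\bar\partial f\cdot\mathrm{Id}$, valid for $h=e^{-f}gh_0$, together with the relation $\sqrt{-1}F_h/\omega_0=(\beta/r+\Delta f)\,\mathrm{Id}-e^f\ln g$, which follows from the second equation of \eqref{eq: Dem} (for the trace-free part) and from $\det g=1$, $\det h=e^{-rf}\det h_0$ (for the trace part). Subtracting the two expressions for $\sqrt{-1}F_h/\omega_0$ cancels the $\Delta f$-terms, and taking the $(1,1)$-entry in the $h_0$-orthonormal frame diagonalizing $\ln g$ gives
\[
\big(\Lambda_{\omega_0}\sqrt{-1}\bar\partial(g^{-1}\partial_0g)\big)^1_1=\beta/r-e^f\lambda_1-\big(\sqrt{-1}F_0/\omega_0\big)^1_1 .
\]
Hence $\Delta\lambda_{\mathrm{max}}\ge e^f\lambda_{\mathrm{max}}-\beta/r+(\sqrt{-1}F_0/\omega_0)^1_1\ge e^f\lambda_{\mathrm{max}}-C\ge -C$, using $\lambda_{\mathrm{max}}\ge0$, $e^f>0$, and that $\beta$ and $\sqrt{-1}F_0$ depend only on the fixed data $(h_0,\omega_0)$; in particular $C$ is independent of $t$. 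Note that no lower bound for $f$ is needed: the term $e^f\lambda_{\mathrm{max}}$ has the favourable sign and is simply discarded.

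The remaining, and I expect main, difficulty is to obtain $C\omega_0$-subharmonicity at the points where $\lambda_{\mathrm{max}}$ is not smooth. If $\lambda_{\mathrm{max}}(p)=0$ this is immediate, since $p$ is then a global minimum. If $\lambda_{\mathrm{max}}(p)>0$ with the top eigenvalue not simple, I would argue in the viscosity sense: any $C^2$ function $\varphi$ touching $\lambda_{\mathrm{max}}$ from above at $p$ also dominates, near $p$, the Rayleigh quotient $\psi(q):=h_0(\ln g(q)\tilde v(q),\tilde v(q))/|\tilde v(q)|_{h_0}^2$ of a smooth section $\tilde v$ extending a top $h_0$-unit eigenvector of $\ln g(p)$ and $h_0$-parallel to first order at $p$; since $\psi\le\lambda_{\mathrm{max}}$ with equality at $p$, the function $\varphi-\psi$ attains a local minimum at $p$, so $\Delta\varphi(p)\ge\Delta\psi(p)$, and a computation of $\Delta\psi(p)$ along the lines of the previous paragraph (with $\tilde v$ replacing the eigenframe section, the extra terms from $\nabla^{h_0}\tilde v$ and $\sqrt{-1}F_0$ being bounded by the fixed data, and $e^f\lambda_{\mathrm{max}}(p)$ again discarded) yields $\Delta\psi(p)\ge -C$. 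Alternatively one can avoid the crossing locus altogether by approximating $\lambda_{\mathrm{max}}$ uniformly from above by the smooth functions $m^{-1}\log\mathrm{tr}(g^m)$ and verifying that $\Delta\big(m^{-1}\log\mathrm{tr}(g^m)\big)\ge -C$ uniformly in $m$, in the spirit of Uhlenbeck–Yau. Either way $\lambda_{\mathrm{max}}$ is $C\omega_0$-subharmonic on $X$.

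For the final assertion, let $G$ be the Green function of $\Delta$ with $\int_XG(x,\cdot)\,\omega_0=0$; it is bounded above on $X\times X$. From $\lambda_{\mathrm{max}}(x)=\fint_X\lambda_{\mathrm{max}}+\int_XG(x,y)\,\Delta\lambda_{\mathrm{max}}(y)$, together with $\Delta\lambda_{\mathrm{max}}+C\ge0$ and $\int_X\Delta\lambda_{\mathrm{max}}\,\omega_0=0$, one gets $\lambda_{\mathrm{max}}(x)\le\fint_X\lambda_{\mathrm{max}}+C$ for all $x$, i.e. $\fint_X\big(\sup\lambda_{\mathrm{max}}-\lambda_{\mathrm{max}}\big)=\sup\lambda_{\mathrm{max}}-\fint_X\lambda_{\mathrm{max}}\le C$.
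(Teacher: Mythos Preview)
Your argument is correct and follows the same route as the paper: combine Lemma~\ref{lem: laplace} with the second equation of~\eqref{eq: Dem'} to get $\Delta\lambda_1\ge e^f\lambda_1-C\ge -C$ (using $\lambda_1\ge 0$ from $\operatorname{tr}\ln g=0$), hence $\lambda_{\max}$ is $C\omega_0$-subharmonic in the viscosity sense. The only difference is presentation: the paper handles the eigenvalue-crossing uniformly---any $C^2$ test function $q\ge\lambda_{\max}$ with $q(p)=\lambda_{\max}(p)$ also touches the smooth local eigenvalue $\lambda_1$ from above at $p$, so $\Delta q(p)\ge\Delta\lambda_1(p)$ and Lemma~\ref{lem: laplace} applies directly---thereby avoiding your case split and Rayleigh-quotient/log-trace detours, and it dismisses as ``standard'' the Green's-function estimate you write out at the end.
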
    

\begin{proof}
    
    Fix a point $p\in X$. Let $q$ be a $C^2$ function defined on a neighborhood of $p$ such that $q\ge\lambda_{\mathrm{max}}$ and $q(p)=(\lambda_{\mathrm{max}})(p)$. 
    If $p\in W$, then using Lemma \ref{lem: laplace}, we see that
    \begin{equation*}
    \begin{aligned}
        \Delta q
        &\ge \Delta\lambda_1\ge -\big(\Lambda_{\omega_0}\sqrt{-1}\bar{\partial}(g^{-1}\partial_0g)\big)^1_1\\
        &=e^f\lambda_1-\big(\Lambda_{\omega_0}\sqrt{-1}F^\circ_0\big)^1_1\ge -C \quad \text{at } p,
    \end{aligned}
    \end{equation*}
    where we used the second equation of \eqref{eq: Dem'} in the equality, and the last inequality follows from $\lambda_1\ge0$, obtained by $\operatorname{tr}\ln g=0$. If $p\notin W$, we have $q\ge\lambda_\mathrm{max}\ge\hat{\lambda}_1$ on $U$ and $q(p)=\lambda_\mathrm{max}(p)=\hat{\lambda}_1(p)$. Using Lemma \ref{lem: laplace} for $\hat{\lambda}_1$, we see that
    \begin{equation*}
        \begin{aligned}
            \Delta q
            &\ge \Delta\hat{\lambda}_1\ge -\big(\Lambda_{\omega_0}\sqrt{-1}\bar{\partial}(\hat{g}^{-1}\partial_0\hat{g})\big)^1_1\ge-\big(\Lambda_{\omega_0}\sqrt{-1}\bar{\partial}(g^{-1}\partial_0g)\big)^1_1-C\\
            &=e^f\lambda_1-\big(\Lambda_{\omega_0}\sqrt{-1}F^\circ_0\big)^1_1-C\ge -C \quad \text{at } p,
        \end{aligned}
    \end{equation*}
    where the third inequality follows since $\hat{g}$ can approximate $g$ in $C^\infty$.
    
    Hence, we obtain $\Delta \lambda_{\mathrm{max}}\ge-C$ in the viscosity sense, which is equivalent to say that $\lambda_{\mathrm{max}}$ is $C\omega_0$-subharmonic. The last assertion is standard, and therefore we omit it here (e.g. \cite[Theorem 1.46]{GZ}).
    \end{proof}

\begin{prop}\label{prop: keyest}
    There exists a constant $C$ such that $e^f \lambda_{\mathrm{max}} <C$ for a solution $(f,g)$ of \eqref{eq: Dem}. In particular, we have $|\Delta f|\le C$.
\end{prop}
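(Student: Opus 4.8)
The plan is to combine two pointwise inequalities for a solution $(f,g)$: a lower bound for $\Delta\lambda_{\mathrm{max}}$ coming from the second equation, and an upper bound for $e^f\lambda_{\mathrm{max}}$ coming from the positivity of the matrix in the first equation. Writing $M:=\frac{\beta}{r}+\Delta f-e^f\ln g+(1-t)\alpha$, I would first note that continuity of the solution from $t=0$ (where $M>0$) together with $\det M=e^{\lambda f}a_0>0$ forces $M>0$ along the whole path. Diagonalizing as in Notation \ref{notation}, the eigenvalues of $M$ are $\mu_i=\frac{\beta}{r}+\Delta f+(1-t)\alpha-e^f\lambda_i$, and since $\lambda_1\ge\dots\ge\lambda_r$ the smallest is $\mu_1$; positivity of $\mu_1$ yields
\begin{equation*}
 e^f\lambda_{\mathrm{max}}\le\Delta f+C .
\end{equation*}
On the other hand, the computation underlying Lemma \ref{lem: Deltalambda} gives $\Delta\lambda_{\mathrm{max}}\ge e^f\lambda_{\mathrm{max}}-C$ in the viscosity sense, the non-smoothness of $\lambda_{\mathrm{max}}$ being handled exactly as there via its $C\omega_0$-subharmonicity.

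From these I would extract the elementary consequences. Since $\operatorname{tr}\ln g=0$ forces $\lambda_{\mathrm{max}}=\lambda_1\ge0$, we have $e^f\lambda_{\mathrm{max}}\ge0$, so the first inequality gives $\Delta f\ge-C$; integrating the second over $X$ and using $\int_X\Delta\lambda_{\mathrm{max}}=0$ gives the $L^1$-bound $\int_X e^f\lambda_{\mathrm{max}}\le C$. Next, $\Delta f\ge-C$ together with the Green's-function representation on $X$ yields $\sup_X f\le\fint_X f+C$, while applying the arithmetic–geometric-mean inequality to $\prod_i\mu_i=e^{\lambda f}a_0$ and integrating (this is where the sign of $\lambda$, fixed for local uniqueness, enters through Jensen's inequality) bounds $\fint_X f$ from above; hence $f\le C$ and $e^f\le C$.

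The heart of the matter is to upgrade the $L^1$-bound on $e^f\lambda_{\mathrm{max}}$ to an $L^\infty$-bound, and this is where I expect the main obstacle. The naive attempt—applying the maximum principle to $v:=e^f\lambda_{\mathrm{max}}$ at its maximum point $p$—fails: using $\partial\lambda_{\mathrm{max}}=-\lambda_{\mathrm{max}}\partial f$ at $p$ together with the two inequalities above leads to $0\ge e^{-f}\Delta v=\Delta\lambda_{\mathrm{max}}+\lambda_{\mathrm{max}}\Delta f-\lambda_{\mathrm{max}}|\partial f|^2$, in which the first-order term $\lambda_{\mathrm{max}}|\partial f|^2$ has the wrong sign and cannot be absorbed, since there is no a priori gradient bound for $f$ (indeed $f$ may tend to $-\infty$, precisely the scenario analyzed in Section \ref{sec: comp}). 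I would therefore avoid a one-point argument and instead run a Moser-type iteration for the weighted subsolution: testing $\Delta\lambda_{\mathrm{max}}\ge e^f\lambda_{\mathrm{max}}-C$ against powers of $v$ and integrating by parts, I expect the troublesome first-order term to be absorbable by Cauchy–Schwarz against the second-order term, the resulting reverse-Hölder inequalities then iterating—with the $L^1$-bound as base case and $e^f\le C$ controlling the weight—to $\sup_X e^f\lambda_{\mathrm{max}}\le C$.

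Finally, the bound $e^f\lambda_{\mathrm{max}}<C$ gives the promised control on $\Delta f$. Since $\lambda_{\mathrm{max}}\ge0$ and $\sum_i\lambda_i=0$ one has $|\lambda_i|\le(r-1)\lambda_{\mathrm{max}}$, so $e^f|\lambda_i|<C$ for every $i$ and hence every eigenvalue satisfies $\mu_i=\Delta f+O(1)$; combined with $\prod_i\mu_i=e^{\lambda f}a_0\le C$ (using $f\le C$), all $\mu_i$ lie in $[\Delta f-C,\Delta f+C]$, which forces $\Delta f\le C$. Together with $\Delta f\ge-C$ this gives $|\Delta f|\le C$, completing the proof of Proposition \ref{prop: keyest}.
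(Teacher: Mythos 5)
Your preliminary steps are sound and match the paper's: positivity of the matrix gives $e^f\lambda_{\mathrm{max}}\le \Delta f+C$ and (taking the trace, since $\operatorname{tr}\ln g=0$) $\Delta f\ge -C$; integration gives the $L^1$-bound $\fint_X e^f\lambda_{\mathrm{max}}\le C$ (the paper gets this by integrating the trace inequality directly, which avoids having to make sense of $\int_X\Delta\lambda_{\mathrm{max}}=0$ for a merely continuous viscosity subsolution); the upper bound $f\le C$ is Pingali's maximum principle; and your closing deduction of $|\Delta f|\le C$ from $e^f|\ln g|\le Ce^f\lambda_{\mathrm{max}}$ is exactly the paper's. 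The gap is at the step you yourself flag as the heart of the matter: the upgrade from the $L^1$-bound to the sup bound. The proposed Moser iteration is not carried out, and as sketched it would not close. Writing $v=e^f\lambda_{\mathrm{max}}$, the cross term $2e^f\nabla f\cdot\nabla\lambda_{\mathrm{max}}=2\nabla f\cdot\nabla v-2v|\nabla f|^2$ leaves a term $-v|\nabla f|^2$ of the wrong sign in the differential inequality for $v$; Cauchy--Schwarz against $|\nabla v|^2$ only reproduces $\int v^{p}|\nabla f|^2$ with a large constant rather than absorbing it, and there is no a priori bound on $|\nabla f|$ (indeed $f$ may tend to $-\infty$). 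So the iteration has no usable starting inequality.

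The paper's actual mechanism is different and avoids all local analysis of the product $e^f\lambda_{\mathrm{max}}$. Since $\lambda_{\mathrm{max}}\ge 0$ and $f\le\sup f$, one has the pointwise bound $e^f\lambda_{\mathrm{max}}\le e^{\sup f}\sup\lambda_{\mathrm{max}}$, so it suffices to bound the product of the two suprema. This is done by bounding $\fint_X e^f\lambda_{\mathrm{max}}$ from \emph{below} by $-C+Ce^{\sup f}\sup\lambda_{\mathrm{max}}$, using two separate Harnack-type inequalities: $\fint_X(\sup\lambda_{\mathrm{max}}-\lambda_{\mathrm{max}})\le C$, which follows from the $C\omega_0$-subharmonicity of $\lambda_{\mathrm{max}}$ (your inequality $\Delta\lambda_{\mathrm{max}}\ge e^f\lambda_{\mathrm{max}}-C\ge-C$, i.e.\ Lemma \ref{lem: Deltalambda}), and $\sup f-\fint_X f\le C$, which follows from $\Delta f\ge -C$ via Green's representation, combined with Jensen's inequality $\fint_X e^{f-\sup f}\ge e^{\fint_X(f-\sup f)}\ge e^{-C}$. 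Comparing this lower bound with the $L^1$ upper bound yields $e^{\sup f}\sup\lambda_{\mathrm{max}}\le C$ and hence the pointwise estimate. You have all the ingredients for this argument already in hand; what is missing is the observation that the two oscillation bounds, applied to $f$ and $\lambda_{\mathrm{max}}$ separately, let the $L^1$-norm of the product control the product of the sups.
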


\begin{proof}
    We see that $\Delta f\ge -C$ for some constant $C$ by taking the trace of the inequality
    \begin{equation}\label{eq: curvposi}
        \frac{\beta}{r}+\Delta f-e^f\ln g+(1-t)\alpha>0
    \end{equation} 
    since $\mathrm{tr}\ln g=0$.
    Also, the upper estimate of $\Delta f$ follows immediately from the estimate of $e^f |\ln g|$, since the right-hand side of the first equation of \eqref{eq: Dem} is bounded by an upper estimate of $f$, which is obtained from the maximum principle \cite[Remark 2.1]{Pin23}.
    Moreover, since $\operatorname{tr} \ln g=0$, we have $-\lambda_r=\sum_{i\neq r}\lambda_i\le (r-1)\lambda_1$. In particular, we have
    $$e^f |\ln g|=e^f \sqrt{\sum\lambda_i^2}\le C e^f\lambda_1.$$
    Thus, it suffices to obtain an estimate of $e^f\lambda_{\mathrm{max}}$. 
    We have
    \begin{equation}\label{eq: int}
        \begin{aligned}
            \fint_X e^f\lambda_{\mathrm{max}}
            =&\fint_X e^f (\lambda_{\mathrm{max}}-\sup \lambda_{\mathrm{max}})+e^f\sup \lambda_{\mathrm{max}}\\
            \ge& -C+ e^{\sup f}\sup \lambda_{\mathrm{max}}\fint_Xe^{f-\sup f}\\
            \ge& -C+ e^{\sup f}\sup \lambda_{\mathrm{max}}\, e^{\fint_X (f-\sup f)} \\
            \ge& -C+C e^{\sup f} \sup \lambda_{\mathrm{max}}.
        \end{aligned}
    \end{equation}
    Here, we used Lemma \ref{lem: Deltalambda} and an upper estimate of $f$ \cite[Remark 2.1]{Pin23} in the first inequality, the convexity of the exponential in the second inequality and $\Delta f\ge -C$, which implies $\fint f-\sup f\ge -C$ by Green's representation formula, in the last inequality.  On the other hand, using inequality \eqref{eq: curvposi}, we also see that
    \begin{equation}\label{eq: L^1}
        \fint_X e^f \lambda_{\mathrm{max}}< \fint_X \frac{\beta}{r}+\Delta f+(1-t)\alpha=\fint_X \frac{\beta}{r}+(1-t)\alpha\le C.
    \end{equation}
    Combining this with \eqref{eq: int}, we obtain $e^f\lambda_{\mathrm{max}}\le e^{\sup f}\sup \lambda_{\mathrm{max}}\le C$.
    \end{proof}

    \begin{rem}
        We can also prove Proposition \ref{prop: keyest} using $\sqrt{|\ln g|^2+1}$ instead of $\lambda_{\mathrm{max}}$. More precisely, we have
        \begin{equation}\label{eq: deltanorm}
        \begin{aligned}
                -\Delta\sqrt{|\ln g|^2+1}
                =&\Lambda_{\omega_0}\sqrt{-1}\bar{\partial}\sum_i\frac{\lambda_i\partial\lambda_i}{\sqrt{|\ln g|^2+1}}=\Lambda_{\omega_0}\sqrt{-1}\bar{\partial}\left(g^{-1}\partial_0 g,\frac{\ln g}{\sqrt{|\ln g|^2+1}}\right)\\
                =& \left(\Lambda_{\omega_0}\sqrt{-1}\bar{\partial}\left(g^{-1}\partial_0g\right),\frac{\ln g}{\sqrt{|\ln g|^2+1}}\right)-\Lambda_{\omega_0}\left(g^{-1}\partial_0g,\partial_0\frac{\ln g}{\sqrt{|\ln g|^2+1}}\right)\\
                \le&-e^f\sqrt{|\ln g|^2+1}+C,
        \end{aligned}
        \end{equation}
        where the inequality follows from the second equation of the Demailly system \eqref{eq: Dem'} and the inequality
        \begin{equation*}
            \begin{aligned}
                &\Lambda_{\omega_0}\sqrt{-1}\left(g^{-1}\partial_0g,\partial_0\frac{\ln g}{\sqrt{|\ln g|^2+1}}\right)\\
                =&\Lambda_{\omega_0}\sqrt{-1}\sum_{i=1}^r\partial\lambda_i\wedge\bar{\partial}\left(\frac{\lambda_i}{\sqrt{|\ln g|^2+1}}\right)+\sum_{i\neq j}(e^{\lambda_j-\lambda_i}-1)\frac{\lambda_j-\lambda_i}{\sqrt{|\ln g|^2+1}}\left|\big(A^{(1,0)}\big)^i_j\right|^2\\
                \ge&\frac{1}{\sqrt{|\ln g|^2+1}}\left(\sum_{i=1}^r |\partial\lambda_i|^2-\frac{\big|\sum_i\lambda_i\partial\lambda_i\big|^2}{\sum_i\lambda_i^2+1}\right)
                \ge\frac{1}{(|\ln g|^2+1)^\frac{3}{2}}\left(\sum_{i\neq j}\left|\lambda_i\partial\lambda_j-\lambda_j\partial\lambda_i\right|^2\right)\ge0.
            \end{aligned}
        \end{equation*}
        Therefore, we have $\Delta\sqrt{|\ln g|^2+1}\ge -C$, which implies an inequality similar to \eqref{eq: int}. By integrating \eqref{eq: deltanorm}, we obtain an $L^1$-bound of $e^f\sqrt{|\ln g|^2+1}$, which completes the proof.
    \end{rem}

\subsection{Construction of a nonpositive quotient sheaf}\label{sec: comp}
We complete the proof of Theorem \ref{thm: main} by constructing a quotient sheaf $Q$ of $E$ such that $\deg Q\le0$ if $f$ is not bounded from below. This is achieved by the technique developed in \cite{UY}. First, recall the definition of a weakly holomorphic subbundle and its relation to a coherent subsheaf.

\begin{defn}[{\cite[Definition 3.4.2]{LT}}]
    An element $\pi\in L^2_1(\mathrm{End}\, E)$ is called a weakly holomorphic subbundle of $E$ if in $L^1(\mathrm{End}\, E)$ we have
    \begin{equation}\label{eq: weakholbdle}
        \pi^*=\pi=\pi^2 \quad \mathrm{and} \quad (\mathrm{Id}_E-\pi)\circ\bar{\partial}(\pi)=0.
    \end{equation}
\end{defn}

\begin{thm}[{\cite[Theorem 3.4.3]{LT}, \cite[S290-292]{UY}}]\label{thm: weakhol}
    A weakly holomorphic subbundle $\pi$ of $E$ represents a coherent subsheaf $\mathcal{F}$ of $E$, and an analytic subset $S\subset X$ with the following properties:
    \begin{enumerate}[font=\normalfont]
        \item $\mathrm{codim}_XS\ge2$
        \item\label{item: reg} $\pi|_{X\setminus S}$ is $C^\infty$ and satisfies \eqref{eq: weakholbdle}.
        \item \label{item: subbdl}$\mathcal{F}':=\mathcal{F}|_{X\setminus S}=\pi|_{X\setminus S}(E|_{X\setminus S})\hookrightarrow E|_{X\setminus S}$ is a holomorphic subbundle.
    \end{enumerate}
\end{thm}

Now, let us define $m_t:=\sup_X (\lambda_{\mathrm{max}})_t$ and $\tilde{g}_t:=e^{-m_t} g_t$,
where $(\lambda_{\mathrm{max}})_t(p)$ is the largest eigenvalue of $\ln g_t$ at $p\in X$.
The arguments in \cite[Lemma 4.1 and Proposition 4.1]{UY} also work in our case by replacing $\varepsilon$ with $e^f$. Therefore, we have the following:

\begin{lem}\label{lem: L^2_1(LT)}
    For a sequence of times $\{t_i\}\subset[0,1]$ such that $t_i\to t_\infty$, 
    there is a subsequence, still denoted by $\{t_i\}$, such that $t_i\to t_\infty$ and the sequence $\tilde{g}_i:=\tilde{g}_{t_i}$ has the following properties:
    \begin{enumerate}[font=\normalfont]
        \item\label{item: L^2_1} For $\sigma\in(0,1]$, the sequence $\tilde{g}^\sigma_i$ weakly converges in $L^2_1$ to $\tilde{g}^\sigma_\infty\neq 0$. Here, for a Hermitian endomorphism $g$, a Hermitian endomorphism $g^\sigma$ is defined as
        $$g^\sigma:=\sum_{i=1}^r e^{\sigma\lambda_i}s_i\otimes s^i$$
        via the localization in Notation \ref{notation}.
        \item\label{item: sigmaj} There is a sequence of $\sigma_j\in(0,1]$ such that $\sigma_j$ converges to zero and the sequence $\tilde{g}_\infty^{\sigma_j}$ weakly converges in $L^2_1$ to $\tilde{g}^0_\infty$.
        \item\label{item: pi} $\pi:=\mathrm{Id}_E-\tilde{g}^0_\infty$ is a weakly holomorphic subbundle of $E$.
    \end{enumerate}
\end{lem}
            
According to Theorem \ref{thm: weakhol}, we have a coherent subsheaf $\mathcal{F}\subset E$ corresponding to $\pi$ defined in Lemma \ref{lem: L^2_1(LT)} \eqref{item: pi}. 
Although an analytic subset $S$ in Theorem \ref{thm: weakhol} is empty since $X$ is one-dimensional, hereafter we compute everything in the way that works in general dimensions as in \cite{UY}.
Since $\tilde{g}^\sigma_i$ also converges strongly in $L^2$ to $\tilde{g}^\sigma_\infty\neq 0$ and $\tilde{g}^\sigma_i\le \mathrm{Id}_E$, the limit $\tilde{g}^0_\infty$ has a strictly positive eigenvalue almost everywhere. 
Therefore, we have

\begin{equation}\label{eq: rank}
    \operatorname{rank}\mathcal{F}=\operatorname{rank} \operatorname{Im}\pi=\operatorname{rank} (\mathrm{Id}_E-\tilde{g}_\infty^0)\le r-1.
\end{equation}

We denote the quotient sheaf $E/\mathcal{F}$ by $Q$. By \eqref{eq: rank}, we have $Q\neq 0$. Also, since $g_t$ is $h_0$-Hermitian, the limit $\pi$ is also $h_0$-Hermitian almost everywhere. Hence, we have $\mathcal{F'}^\perp=\operatorname{Im}\pi^\perp$, where $\mathcal{F'}$ is a subbundle defined in Theorem \ref{thm: weakhol} \eqref{item: subbdl} and $\pi^\perp:=(\mathrm{Id}_E-\pi)$.

\begin{prop}\label{prop: f}
    Suppose that there exists a sequence of points $\{p_i\}\subset X$ and times $\{t_i\}\subset[0,1]$ such that $f_i(p_i)\to -\infty$ as $i\to\infty$ for a solution $(f_t,g_t)$ of the Demailly system \eqref{eq: Dem'}, where $f_i:=f_{t_i}$. Then, we have $\deg {Q}\le0$.
\end{prop}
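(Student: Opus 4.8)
The plan is to compute $\deg Q$ via Chern–Weil theory using the induced metrics, and to bound the resulting integral from above by $0$ using the curvature estimates of Section~\ref{sec: curv} together with the fact that $f_i \to -\infty$ somewhere forces a corresponding degeneration of the second fundamental form. Concretely, on $X \setminus S = X$ (recall $S = \varnothing$ since $\dim X = 1$) we have the smooth orthogonal splitting $E = F \oplus F^\perp$ with respect to $h_0$, with $F^\perp = \operatorname{Im}\pi^\perp$. For each $i$, let $\beta_i \in A^{1,0}(\operatorname{Hom}(F, F^\perp))$ be the second fundamental form of $F \hookrightarrow (E, h_0)$ — note this is independent of $i$ since the subbundle and the metric $h_0$ are fixed — and the Gauss–Codazzi formula gives $\deg_{h_0} Q = \deg_{h_0} E - \deg_{h_0} F$ with $\deg_{h_0} F = \tfrac{1}{2\pi}\int_X \operatorname{tr}\!\big(\pi \circ \sqrt{-1}F_0\big) + \tfrac{1}{2\pi}\int_X |\beta|^2 \omega_0$ or a variant thereof. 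The key point is to instead express $\deg F$ using the \emph{solution} metrics and take a limit.

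First I would, following \cite[Section~4]{UY}, use the trace-free part of the second equation of \eqref{eq: Dem'} to relate $\Lambda_{\omega_0}\sqrt{-1}F_{h_t}$ restricted to the approximate subbundle $\tilde g_i^\sigma$ to $-e^{f_i}\ln g_i$, and then compute $\operatorname{tr}(\pi \circ \Lambda_{\omega_0}\sqrt{-1}F_0)$ by passing to the weak $L^2_1$ limit in the identity for $\tilde g_i^\sigma$, exactly as in the Uhlenbeck–Yau construction. This expresses $2\pi\deg F$ as $\int_X \operatorname{tr}(\pi \circ \sqrt{-1}F_0)$ minus a nonnegative term coming from $|\bar\partial \pi|^2$ (the Gauss equation for a weakly holomorphic subbundle), so $\deg F \ge \tfrac{1}{2\pi}\int_X \operatorname{tr}(\pi \sqrt{-1}F_0) + (\text{something} \ge 0)$, hence $\deg Q = \deg E - \deg F \le \tfrac{1}{2\pi}\int_X \operatorname{tr}((\mathrm{id}-\pi)\sqrt{-1}F_0)$.

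Next I would identify $\tfrac{1}{2\pi}\int_X \operatorname{tr}((\mathrm{id}_E - \pi)\sqrt{-1}F_0)$ with a limit of integrals involving $e^{f_i}\ln g_i$ paired with $\tilde g_i^0$. The crucial input is Lemma~\ref{lem: Deltalambda}: since $\lambda_{\max}$ is $C\omega_0$-subharmonic, $\fint_X(m_i - \lambda_{\max}) \le C$, so $\tilde g_i = e^{-m_i}g_i$ does not concentrate, and its $L^2_1$ limit $\tilde g_\infty^0$ is nonzero as recorded in Lemma~\ref{lem: L^2_1(LT)}. Now the hypothesis $f_i(p_i) \to -\infty$ enters: combined with $\Delta f_i \ge -C$ and Green's formula (as in \eqref{eq: int}), $f_i(p_i) \to -\infty$ forces $\fint_X f_i \to -\infty$, hence $e^{\sup f_i} \to 0$, hence the term $\int_X \operatorname{tr}((\mathrm{id}_E-\pi)\,e^{f_i}\ln g_i \cdot (\dots))$ — which is $\le C e^{\sup f_i}\sup\lambda_{\max} \le C e^{\sup f_i} \to 0$ by Proposition~\ref{prop: keyest} and its proof — vanishes in the limit, killing the only term that could have made $\deg Q$ positive. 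One then reads off $\deg Q \le 0$.

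The main obstacle I anticipate is the passage to the limit in the Chern–Weil integral: one must justify that the weak $L^2_1$ convergence $\tilde g_i^\sigma \rightharpoonup \tilde g_\infty^\sigma$ and $\tilde g_\infty^{\sigma_j} \rightharpoonup \tilde g_\infty^0$ is strong enough to identify $\int_X \operatorname{tr}(\pi\,\sqrt{-1}F_0)$ and the Gauss-term $\int_X |\bar\partial\pi|^2$ as the genuine limits of the corresponding finite-$i$ quantities — this requires the lower semicontinuity of the $L^2$ norm of the derivative under weak convergence (which only gives an inequality in the favorable direction, which is exactly what we need) together with strong $L^2$ convergence of $\tilde g_i^\sigma$ to control the undifferentiated terms. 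This is precisely the technical heart of \cite[Section~4]{UY}, adapted here by the substitution $\varepsilon \leftrightarrow e^{f}$ already noted before Lemma~\ref{lem: L^2_1(LT)}; the new feature, and the reason the argument closes, is that $e^{\sup f_i}\to 0$ makes the "error" term tend to zero rather than merely stay bounded, so no Uhlenbeck–Yau-style cushioning constant survives into the final inequality.
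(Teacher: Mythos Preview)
Your approach has a fatal sign error in the Gauss--Codazzi step. You write ``$2\pi\deg F$ equals $\int_X \operatorname{tr}(\pi\,\sqrt{-1}F_0)$ minus a nonnegative term, so $\deg F \ge \tfrac{1}{2\pi}\int_X \operatorname{tr}(\pi\,\sqrt{-1}F_0) + (\text{something}\ge 0)$,'' but subtracting a nonnegative quantity gives $\deg F \le \int \operatorname{tr}(\pi\,\sqrt{-1}F_0)$, hence $\deg Q \ge \int \operatorname{tr}(\pi^\perp\sqrt{-1}F_0)$ --- the second fundamental form contributes \emph{positively} to the degree of a quotient. So bounding $\int \operatorname{tr}(\pi^\perp\sqrt{-1}F_0)$ from above, even by zero, tells you nothing about $\deg Q$ from above. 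The paper therefore does not discard the second fundamental form: it keeps the full equality $\deg Q = \int \operatorname{tr}(\sqrt{-1}F_0\circ\pi^\perp) + \int |\pi^\perp\partial_0\pi|^2$, rewrites $\sqrt{-1}F_0 = \sqrt{-1}F_{h_i} + \sqrt{-1}\bar\partial\partial f_i - \sqrt{-1}\bar\partial(g_i^{-1}\partial_0 g_i)$, and uses \cite[(4.18)]{UY} to show that the $-\bar\partial(g_i^{-1}\partial_0 g_i)$ piece absorbs the second fundamental form with the correct sign in the limit.

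A second gap: your claim that the ``error term'' vanishes because $e^{\sup f_i}\sup\lambda_{\mathrm{max}} \le C e^{\sup f_i}\to 0$ is wrong --- Proposition~\ref{prop: keyest} only gives $e^{\sup f_i}\sup(\lambda_{\mathrm{max}})_i \le C$, and $\sup(\lambda_{\mathrm{max}})_i$ is in general unbounded (indeed $m_i\to\infty$ is exactly what makes $\pi\ne 0$). Relatedly, your outline never invokes the \emph{first} equation of \eqref{eq: Dem'}, but this is where the hypothesis $f_i\to-\infty$ actually does its work in the paper: since $e^{\lambda f_i}a_0\to 0$, the determinant forces each diagonal entry $\big(\Lambda_{\omega_0}\sqrt{-1}F_{h_i}\big)^k_k + (1-t_i)\alpha$ to tend to $0$ for those $k$ with $\lambda_k - m_i$ bounded below, whence $\big(\Lambda_{\omega_0}\sqrt{-1}F_{h_i}\big)^k_k\,e^{\sigma_j(\lambda_k-m_i)} \to -(1-t_\infty)\alpha \le 0$; for the remaining $k$ the factor $e^{\sigma_j(\lambda_k-m_i)}$ kills the (bounded) curvature. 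It is this eigenvalue dichotomy, not a vanishing of $e^{f_i}\ln g_i$, that makes $\lim_{j}\lim_{i}\int \operatorname{tr}(\sqrt{-1}F_{h_i}\circ\tilde g_i^{\sigma_j})\le 0$.
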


\begin{proof}
    We induce the Hermitian metric $h_Q$ on $Q|_{X\setminus S}$, which is a quotient bundle, from the reference Hermitian metric $h_0$. 
    Then, we have 
    \begin{equation*}
        \Lambda_{\omega_0}\operatorname{tr}\sqrt{-1}F_{h_Q}= \pi^{\perp}\circ\Lambda_{\omega_0}\operatorname{tr}\sqrt{-1}F_0\circ\pi^{\perp}+|\pi^\perp\partial_0\pi|^2 \quad \text{on } X\setminus S,
    \end{equation*}
    where $\pi^\perp\partial_0\pi$ is the second fundamental form. Note that by the same argument as in \cite[equation (4.15)]{UY}, we have $\pi\partial_0\pi=0$, and therefore $\pi^\perp\partial_0\pi=\partial_0\pi$, which is in $L^2$.  
    Thus, we have
    \begin{equation}\label{eq: degQ}
    \begin{aligned}
        \deg Q
        =&\int_{X\setminus S} \big(\mathrm{tr}\sqrt{-1}F_{h_Q}\big)=\int_{X\setminus S} \mathrm{tr} \left(\sqrt{-1}F_0\circ\pi^{\perp}\right)+|\partial_0\pi|^2\omega_0\\
        =&\lim_{j\to\infty}\lim_{i\to\infty}\int_X\mathrm{tr}\Big(\sqrt{-1}F_0\circ \tilde{g}^{\sigma_j}_i\Big)+\int_X|\partial_0\pi|^2\omega_0\\
        =&\lim_{j\to\infty}\lim_{i\to\infty}\int_{X} \mathrm{tr}\left(\big(\sqrt{-1}F_{h_i}+\sqrt{-1}\bar{\partial}\partial f_i-\sqrt{-1}\bar{\partial}(g_i^{-1}\partial_0g_i)\big)\circ\tilde{g}^{\sigma_j}_i\right)+\int_X|\partial_0\pi|^2\omega_0\\
        =&\lim_{j\to\infty}\lim_{i\to\infty}\int_{X}\mathrm{tr}\big(\sqrt{-1}F_{h_i}\circ\tilde{g}^{\sigma_j}_i\big)+\lim_{j\to\infty}\lim_{i\to\infty}\int_{X}\sqrt{-1}\bar{\partial}\partial f_i\  (\mathrm{tr}\ \tilde{g}^{\sigma_j}_i)\\
        &+\left(-\lim_{j\to\infty}\lim_{i\to\infty}\int_{X}\mathrm{tr}\Big(\big(\sqrt{-1}\bar{\partial}(g_i^{-1}\partial_0g_i)\circ\tilde{g}^{\sigma_j}_i\big)\Big)+\int_X|\partial_0\pi|^2\omega_0\right),
    \end{aligned}
    \end{equation}
    where the first equality follows from \cite[Proposition (3.4.9)]{LT}, the third equality follows from the fact that $\tilde{g}^{\sigma_j}_i\to \pi^\perp$ strongly in $L^2$, and we denote $h_i:=h_{t_i}$.
    The last term is nonpositive by the proof of inequality (4.18) in \cite{UY}.
    The second term is zero, since Proposition \ref{prop: keyest} implies that 
    \begin{equation*}
        \lim_{j\to\infty}\lim_{i\to\infty}\int_{X}\sqrt{-1}\bar{\partial}\partial f_i\  (\mathrm{tr}\ \tilde{g}^{\sigma_j}_i)=\lim_{i\to\infty}\int_X \sqrt{-1}\bar{\partial}\partial f_i\ (\operatorname{tr}\pi^\perp)
    \end{equation*}
    and the right-hand side is zero since $\mathrm{tr} \ \pi^\perp$ is a constant integer almost everywhere following from $\pi\in L^2_1$. We claim that the first term is nonpositive if $f_i(p_i)\to-\infty$ as $i\to\infty$. Note that, by Proposition \ref{prop: keyest}, we have $\operatorname{osc} f\le C$ and consequently $f_i\to -\infty$ on $X$ if $f_i(p_i)\to-\infty$. Fix an arbitrary point $p\in X$. We have 
    \begin{equation*}
        \mathrm{tr}\big(\Lambda_{\omega_0}\sqrt{-1}F_{h_i}\circ\tilde{g}^{\sigma_j}_i\big)=\sum_{k=1}^r \big(\Lambda_{\omega_0}\sqrt{-1}F_{h_i}\big)^k_k \ e^{\sigma_j(\lambda_k-m)},
    \end{equation*}
    where $\lambda_1\ge\lambda_2\ge\dots\ge\lambda_r$ are eigenvalues of $\ln g_{t_i}$ and $m:=m_{t_i}$ is as defined after Theorem \ref{thm: weakhol}.
    For an integer $k\in\{1,2,\dots,r\}$ such that $\lambda_k-m\to-\infty$ as $i\to\infty$, we have
    \begin{equation}\label{eq: forslowk}
        \big(\Lambda_{\omega_0}\sqrt{-1}F_{h_i}\big)^k_k \ e^{\sigma_j(\lambda_k-m)}\to 0
    \end{equation}
    since $(\sqrt{-1}F_h\big)^k_k$ is uniformly bounded by Proposition \ref{prop: keyest}. On the other hand, assume an integer $k\in\{1,2,\dots,r\}$ satisfies $\lambda_k-m> -C$ for some constant $C$ uniformly in $i$. Then, at $p$, we have
    \begin{equation}\label{eq: diverge}
    \begin{aligned}
        0\le& \big(\Lambda_{\omega_0}\sqrt{-1}F_{h_i}\big)^k_k+(1-t_i)\alpha=\frac{\beta}{r}+\Delta f_i-e^{f_i}\lambda_k+(1-t_i)\alpha\\
        \le&\frac{\beta}{r}+\Delta f_i-e^{f_i}(\lambda_1-C)+(1-t_i)\alpha,
    \end{aligned}
    \end{equation}
    where the last inequality follows from $-\lambda_k(p)\le-m+C\le-\lambda_1(p)+C$. Since $f_i\to-\infty$ on $X$, by the first equation of \eqref{eq: Dem}, we have $\beta/r+\Delta f_i-e^{f_i}\lambda_1+(1-t_i)\alpha\to0$. Applying this to \eqref{eq: diverge} and noting that $\lambda_k-m>-C$ and $\sigma_j\to0$ as in Lemma \ref{lem: L^2_1(LT)} \eqref{item: sigmaj}, we consequently obtain that
    \begin{equation}\label{eq: forblowupk}
         \big(\Lambda_{\omega_0}\sqrt{-1}F_{h_i}\big)^k_k \ e^{\sigma_j(\lambda_k-m)}\to -(1-t_\infty)\alpha\le0 \quad \mathrm{as} \ i\to\infty.
    \end{equation}
    Combining \eqref{eq: forslowk} and \eqref{eq: forblowupk}, we see that $\lim_{j\to\infty}\lim_{i\to\infty}\mathrm{tr}\big(\Lambda_{\omega_0}\sqrt{-1}F_{h_i}\circ\tilde{g}^{\sigma_j}_i\big)\le0$. Since $\sqrt{-1}F_h$ is uniformly bounded by Proposition \ref{prop: keyest} and $\tilde{g}^\sigma$ is uniformly bounded, by the Lebesgue dominated convergence theorem, we have 
    $$\lim_{j\to\infty}\lim_{i\to\infty}\int_{X}\mathrm{tr}\big(\sqrt{-1}F_{h_i}\circ\tilde{g}^{\sigma_j}_i\big)\le0.$$
    Applying these estimates to \eqref{eq: degQ}, we obtain $\deg Q\le 0$.
\end{proof}

\begin{proof}[Proof of Theorem \ref{thm: main}]
    Assume $E$ is ample. Then, any quotient bundle is also ample, and, therefore, its degree is positive. By Proposition \ref{prop: f}, we have a constant $C$ such that $f\ge -C$ for a solution $(f,g)$ of the Demailly system \eqref{eq: Dem'}. By \cite[Theorem 1.2]{Pin23}, the lower estimate of $f$ completes the proof.
\end{proof}

\subsection{An alternative proof of \eqref{item: Gposi}$\Rightarrow$\eqref{item: solv} on a direct sum of line bundles}\label{sec: alt}

\begin{lem}\label{lem: Gposi}
    Suppose that the reference metric $h_0$ has Griffiths positive curvature $\sqrt{-1}F_0$. Then, for a solution $(f,g)$ of \eqref{eq: Dem'}, there exists a constant $C$ such that $f\ge -\mathrm{osc}\, \lambda_{\mathrm{max}}-C$.
\end{lem}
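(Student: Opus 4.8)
The plan is to run the maximum principle on the function $\lambda_{\mathrm{max}}-f$ at the point where it is largest, using Griffiths positivity of the reference curvature to force the matrix in the first equation of \eqref{eq: Dem'} to be uniformly positive there.

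First I would sharpen the computation behind Lemma \ref{lem: Deltalambda}. Retaining the curvature term instead of discarding it, Lemma \ref{lem: laplace} together with the second equation of \eqref{eq: Dem'} gives, in the viscosity sense,
\begin{equation*}
    \Delta\lambda_{\mathrm{max}}\ge e^f\lambda_{\mathrm{max}}+\big(\Lambda_{\omega_0}\sqrt{-1}F_0\big)^1_1-\frac{\beta}{r},
\end{equation*}
where $\big(\Lambda_{\omega_0}\sqrt{-1}F_0\big)^1_1$ denotes the curvature evaluated on the top eigenvector of $\ln g$. Since $h_0$ is Griffiths positive, $\Lambda_{\omega_0}\sqrt{-1}F_0$ is a positive definite Hermitian endomorphism on the compact surface $X$, so there is a constant $c_0>0$, independent of the point and of the frame, with $\big(\Lambda_{\omega_0}\sqrt{-1}F_0\big)^1_1\ge c_0$.

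Next, let $q\in X$ maximize $\lambda_{\mathrm{max}}-f$. As $f$ is smooth, the smooth function $\sup_X(\lambda_{\mathrm{max}}-f)+f$ touches $\lambda_{\mathrm{max}}$ from above at $q$, so the viscosity inequality yields
\begin{equation*}
    \Delta f(q)-e^{f(q)}\lambda_{\mathrm{max}}(q)\ge\big(\Lambda_{\omega_0}\sqrt{-1}F_0\big)^1_1(q)-\frac{\beta}{r}.
\end{equation*}
Because every eigenvalue $\lambda_i$ of $\ln g$ satisfies $\lambda_i\le\lambda_{\mathrm{max}}$, each eigenvalue of the matrix $\tfrac{\beta}{r}+\Delta f-e^f\ln g+(1-t)\alpha$ is bounded below, at $q$, by
\begin{equation*}
    \frac{\beta}{r}+\Delta f(q)-e^{f(q)}\lambda_{\mathrm{max}}(q)+(1-t)\alpha\ge\big(\Lambda_{\omega_0}\sqrt{-1}F_0\big)^1_1(q)+(1-t)\alpha\ge c_0.
\end{equation*}
Hence the determinant on the left-hand side of the first equation of \eqref{eq: Dem'} is at least $c_0^r$ at $q$; comparing with the right-hand side $e^{\lambda f}a_0$ and using $\lambda>0$ together with the positivity of $a_0$, I obtain $f(q)\ge-C$.

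Finally I would propagate this single-point bound to all of $X$. Since $q$ maximizes $\lambda_{\mathrm{max}}-f$, for every $p\in X$ we get $\lambda_{\mathrm{max}}(p)-f(p)\le\lambda_{\mathrm{max}}(q)-f(q)\le\sup_X\lambda_{\mathrm{max}}+C$, and therefore $f(p)\ge\lambda_{\mathrm{max}}(p)-\sup_X\lambda_{\mathrm{max}}-C\ge-\mathrm{osc}\,\lambda_{\mathrm{max}}-C$. I expect the delicate step to be the very first one: justifying the viscosity differential inequality for the merely continuous function $\lambda_{\mathrm{max}}$ and isolating the constant $\tfrac{\beta}{r}+\big((\Lambda_{\omega_0}\sqrt{-1}F_0)^1_1-\tfrac{\beta}{r}\big)=(\Lambda_{\omega_0}\sqrt{-1}F_0)^1_1$, which is precisely where Griffiths positivity of the reference metric is used; the remaining steps are then elementary applications of the maximum principle and of the first equation.
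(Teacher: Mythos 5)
Your proposal is correct and follows essentially the same route as the paper: both arguments apply the maximum principle to $f-\lambda_{\mathrm{max}}$ at its extremum, use the sign of the off-diagonal terms in Lemma \ref{lem: laplace} for the top eigenvalue together with the second equation of \eqref{eq: Dem'}, and invoke Griffiths positivity of $\sqrt{-1}F_0$ to bound each factor of the determinant below by a uniform positive constant, forcing $e^{\lambda f}a_0\ge c_0^r$ at that point. The only cosmetic difference is that you phrase the touching argument in viscosity language with the curvature term retained, while the paper works directly with the locally diagonalizing eigenvalue function $\lambda_1$ and the product form \eqref{eq: Demprod}.
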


\begin{proof}
    Let $p_0\in X$ be a point such that $(f-\lambda_{\mathrm{max}})(p_0)=\min_X (f-\lambda_{\mathrm{max}})$. As in Notation \ref{notation}, we diagonalize $\ln g$ at $p_0$ via a local frame $\{s_i\}$ if $p_0\in W$ or a local frame $\{\hat{s}_i\}$ if $p_0\notin W$. 
    Note that if $\ln  g$ is diagonalized, so is the curvature $\sqrt{-1}F_h$, since we have
    \begin{equation*}
    \begin{aligned}
        \sqrt{-1}F_h
        =\frac{\operatorname{tr}\sqrt{-1}F_h}{r}+\sqrt{-1}F_h^\circ
        =\frac{\operatorname{tr}\sqrt{-1}F_h}{r}-e^f \ln g\,\omega_0,
    \end{aligned}
    \end{equation*}
    where the last equality is the second equation of the Demailly system \eqref{eq: Dem'}.
    Therefore, the first equation of the Demailly system \eqref{eq: Dem'} is expressed as
    \begin{equation}\label{eq: Demprod}
        \prod_{i=1}^r\left(\frac{(\sqrt{-1}F_0)^i_i}{\omega_0}+\Delta f+\frac{\sqrt{-1}\bar{\partial}(g^{-1}\partial_0g)^i_i}{\omega_0}+(1-t)\alpha\right)=e^{\lambda f}a_0.
    \end{equation}
    Define $a_i:=(\sqrt{-1}F_0)^i_i/\omega_0$, which is positive by assumption. 
    If $p_0\in W$, by inequality \eqref{eq: laplace}, we have
    \begin{equation*}
    \begin{aligned}
       \left(\Lambda_{\omega_0}\sqrt{-1}\bar{\partial}(g^{-1}\partial_0g)\right)^1_1
       =&- \Delta\lambda_1+\sum_{j\neq 1}\big(e^{\lambda_1-\lambda_j}-1\big)C_{1,j}-\sum_{j\neq 1}\big(e^{\lambda_j-\lambda_1}-1\big)C_{j,1}\\
       \ge&- \Delta\lambda_1.
    \end{aligned}
    \end{equation*}
    Then, we have
    \begin{equation*}
        0\le\Delta (f-\lambda_1)\le \Delta f+\sqrt{-1}\bar{\partial}(g^{-1}\partial_0g)^1_1/\omega_0 \quad \text{at } p_0.
    \end{equation*}
    If $p_0\notin W$, applying inequality \eqref{eq: laplace} for $\hat{\lambda}_1$, we have
    \begin{equation*}
    \begin{aligned}
       \varepsilon+\left(\Lambda_{\omega_0}\sqrt{-1}\bar{\partial}(g^{-1}\partial_0g)\right)^1_1
       \ge&\left(\Lambda_{\omega_0}\sqrt{-1}\bar{\partial}(\hat{g}^{-1}\partial_0\hat{g})\right)^1_1
       \ge- \Delta\hat{\lambda}_1,
    \end{aligned}
    \end{equation*}
    where $\varepsilon$ is a small constant chosen later and the first inequality follows since $\hat{g}$ can be arbitrarily close to $g$ in $C^\infty$.
    Since $\hat{\lambda}_1\le\lambda_1$ and $\hat{\lambda}_1(p_0)=\lambda_1(p_0)$, we have
    \begin{equation*}
        0\le\Delta (f-\hat{\lambda}_1)\le \Delta f+\sqrt{-1}\bar{\partial}(g^{-1}\partial_0g)^1_1/\omega_0+\varepsilon \quad \text{at } p_0.
    \end{equation*}
    Therefore, in either case, we have
    \begin{equation*}
    \begin{aligned}
     0<&a_1+(1-t)\alpha\le a_1+\Delta f+\sqrt{-1}\bar{\partial}(g^{-1}\partial_0g)^1_1/\omega_0+(1-t)\alpha+\varepsilon\\
     =&\beta/r+\Delta f-e^f \lambda_1+(1-t)\alpha+\varepsilon\\
     \le&\beta/r+\Delta f-e^f \lambda_i+(1-t)\alpha+\varepsilon\\
     =&a_i+\Delta f+\sqrt{-1}\bar{\partial}(g^{-1}\partial_0g)^i_i/\omega_0+(1-t)\alpha+\varepsilon.
    \end{aligned}
    \end{equation*}
    By applying this estimate to \eqref{eq: Demprod}, we obtain
    \begin{equation*}
        0<(a_1/2)^r<(a_1+(1-t)\alpha-\varepsilon)^r\le e^{\lambda f}a_0 \quad \text{at } p_0,
    \end{equation*}
    where we chose $\varepsilon<a_1/2$ so that the second inequality follows.
    Thus, we obtain $f(p_0)>-C$ for some constant $C$. Hence, we obtain
    \begin{equation*}
        f=f-\lambda_{\mathrm{max}}+\lambda_{\mathrm{max}}\ge-C-\lambda_{\mathrm{max}}(p_0)+\lambda_{\mathrm{max}}>-\mathrm{osc}\, \lambda_{\mathrm{max}}-C,
    \end{equation*}
    which is the desired estimate.
\end{proof}

We show that for a direct sum of line bundles, we have $\operatorname{osc} \lambda_{\mathrm{max}}\le C$.
Let $E$ be a direct sum of line bundles $E=\oplus L_i$ and take a direct sum of Hermitian metrics on each line bundle as a reference metric. Then, considering $\ln g=\oplus u_i$, where $u_i$'s are smooth functions, we can express the Demailly system \eqref{eq: Dem'} as 
\begin{equation}\label{eq: directsum}
    \begin{cases}
       \prod_i \left(\frac{\beta}{r}+\Delta f-e^f u_i+(1-t)\alpha\right)=e^{\lambda f}a_0,\\
        \left(\Lambda_{\omega_0}\sqrt{-1}F_0^\circ\right)^i_i-\Delta u_i=-e^f u_i.
    \end{cases}
\end{equation}
In this setting, we denote $u_{\mathrm{max}}:=\max_i u_i$.

\begin{lem}\label{lem: osc}
    Let $(f,\{u_i\})$ be a solution of \eqref{eq: directsum}. Then, there exists a constant $C$ such that $\operatorname{osc} u_\mathrm{max}\le C$.
\end{lem}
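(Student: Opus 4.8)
The plan is to show that $u_{\mathrm{max}}$ is $C\omega_0$-subharmonic and simultaneously obtain an $L^1$-type control on it, so that Green's representation formula converts the subharmonicity into a genuine oscillation bound --- exactly in the spirit of Lemma \ref{lem: Deltalambda} and Proposition \ref{prop: keyest}, but now exploiting the fact that for a direct sum of line bundles the functions $u_i$ are globally defined smooth functions, not merely local eigenvalue functions. First I would fix a point $p$ and an index $k$ with $u_k(p)=u_{\mathrm{max}}(p)$, and use the second equation of \eqref{eq: directsum} to compute $\Delta u_k = (\Lambda_{\omega_0}\sqrt{-1}F_0^\circ)^k_k + e^f u_k$; since $\sum_i u_i = \operatorname{tr}\ln g = 0$ we get $u_{\mathrm{max}}\ge 0$ everywhere, so $\Delta u_k \ge -C$ at $p$. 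As in the proof of Lemma \ref{lem: Deltalambda}, testing against an upper $C^2$ barrier $q\ge u_{\mathrm{max}}$ with $q(p)=u_{\mathrm{max}}(p)$ gives $\Delta q\ge \Delta u_k\ge -C$ at $p$, hence $u_{\mathrm{max}}$ is $C\omega_0$-subharmonic in the viscosity sense; in particular $\fint_X(\sup u_{\mathrm{max}}-u_{\mathrm{max}})\le C$.

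Next I would turn this into a full two-sided bound. The subharmonicity already controls $\sup u_{\mathrm{max}}-u_{\mathrm{max}}$ in $L^1$; combined with Green's formula it in fact controls $\sup u_{\mathrm{max}} - \fint_X u_{\mathrm{max}}$ pointwise, i.e. $\operatorname{osc} u_{\mathrm{max}}$ will follow once I pin down $\fint_X u_{\mathrm{max}}$ (equivalently, once I bound $u_{\mathrm{max}}$ below at one point, or bound its average). For this I would run the same integration argument as in \eqref{eq: int}--\eqref{eq: L^1}: from the first equation of \eqref{eq: directsum} together with positivity of each factor $\tfrac{\beta}{r}+\Delta f - e^f u_i + (1-t)\alpha > 0$, the factor with $i=k$ gives $e^f u_{\mathrm{max}} < \tfrac{\beta}{r}+\Delta f + (1-t)\alpha$ pointwise, whose integral is $\le C$ since $\fint_X \Delta f = 0$. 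Thus $\fint_X e^f u_{\mathrm{max}}\le C$, and then, writing $\fint_X e^f u_{\mathrm{max}} = \fint_X e^f(u_{\mathrm{max}}-\sup u_{\mathrm{max}}) + e^{\sup f}\sup u_{\mathrm{max}}\fint_X e^{f-\sup f}$ and using the $L^1$-bound on $\sup u_{\mathrm{max}}-u_{\mathrm{max}}$ together with Jensen's inequality and $\fint_X f - \sup f\ge -C$ (from $\Delta f\ge -C$ and Green's formula, as in Proposition \ref{prop: keyest}), I get $\sup u_{\mathrm{max}}\le C e^{-\sup f}$. But $\sup f\le C$ by the maximum principle (\cite[Remark 2.1]{Pin23}), so $\sup u_{\mathrm{max}}\le C$; combined with $u_{\mathrm{max}}\ge 0$ and the $L^1$-subharmonicity this yields $\operatorname{osc} u_{\mathrm{max}}\le \sup u_{\mathrm{max}}\le C$.

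The step I expect to be the genuine obstacle is the lower bound on $\sup f$ feeding back into the chain above: the argument is slightly circular in that one wants $\operatorname{osc} u_{\mathrm{max}}$ in order to get the lower bound on $f$ (Lemma \ref{lem: Gposi}), yet here I need an a priori grip on $\sup f$ (only an \emph{upper} bound, which is the unconditional one from \cite[Remark 2.1]{Pin23}) to close the estimate for $u_{\mathrm{max}}$. One must therefore be careful to use only the \emph{upper} bound $\sup f\le C$ and the \emph{lower} Laplacian bound $\Delta f\ge -C$ --- both of which hold unconditionally by Proposition \ref{prop: keyest} and \cite[Remark 2.1]{Pin23} --- and never the (not-yet-proved) lower bound on $f$. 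Once that bookkeeping is respected the estimate is self-contained, and it is precisely the reason the method is limited to direct sums of line bundles: for a general bundle the $u_i$ are only local eigenvalue functions, the identity $\sum u_i = 0$ still holds but the clean pointwise comparison $e^f u_{\mathrm{max}} < \tfrac{\beta}{r}+\Delta f+(1-t)\alpha$ in the first equation requires the \emph{product} form \eqref{eq: Demprod}, which is available only because diagonalizing $\ln g$ diagonalizes $\sqrt{-1}F_h$ in the split case.
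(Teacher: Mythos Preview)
Your argument has a genuine gap at the very step you flag as delicate. From the chain
\[
\fint_X e^f u_{\mathrm{max}} \ge -C + C' e^{\sup f}\sup u_{\mathrm{max}}
\quad\text{and}\quad
\fint_X e^f u_{\mathrm{max}} \le C
\]
you correctly obtain $e^{\sup f}\sup u_{\mathrm{max}}\le C$, i.e.\ $\sup u_{\mathrm{max}}\le C e^{-\sup f}$. You then write ``$\sup f\le C$ \ldots\ so $\sup u_{\mathrm{max}}\le C$'', but this implication is backwards: an \emph{upper} bound on $\sup f$ gives a \emph{lower} bound on $e^{-\sup f}$, not an upper bound. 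To conclude $\sup u_{\mathrm{max}}\le C$ you would need $\sup f\ge -C$, and that is precisely the estimate that Lemma~\ref{lem: osc} is meant to feed into Lemma~\ref{lem: Gposi} to produce. So the circularity you warn against in your last paragraph is actually present, and the argument does not close. What you have re-derived is exactly the content of Proposition~\ref{prop: keyest} (namely $e^{\sup f}\sup u_{\mathrm{max}}\le C$), together with the $L^1$ one-sided control $\fint(\sup u_{\mathrm{max}}-u_{\mathrm{max}})\le C$ from subharmonicity; neither of these, nor their combination with $u_{\mathrm{max}}\ge 0$, yields a pointwise oscillation bound without a lower bound on $f$.

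The paper's proof sidesteps this entirely by exploiting the one feature of the split case you mention but do not use: each $u_i$ is a \emph{globally} defined smooth function. Proposition~\ref{prop: keyest} gives $|e^f u_i|\le C$ for every $i$, and then the second equation of \eqref{eq: directsum} reads $\Delta u_i = (\Lambda_{\omega_0}\sqrt{-1}F_0^\circ)^i_i + e^f u_i$, so one gets a \emph{two-sided} bound $|\Delta u_i|\le C$. Green's formula then yields $\operatorname{osc} u_i\le C$ directly. Choosing $i$ so that $u_i(p_0)=\sup u_{\mathrm{max}}$ at a maximum point $p_0$ of $u_{\mathrm{max}}$, and using $u_i\le u_{\mathrm{max}}$ everywhere, gives $\sup u_{\mathrm{max}} - u_{\mathrm{max}}\le u_i(p_0)-u_i\le\operatorname{osc} u_i\le C$. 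The point is that the two-sided Laplacian bound on a single global $u_i$ replaces your attempt to bound $\sup u_{\mathrm{max}}$ itself; no information about $\inf f$ or $\sup f$ beyond Proposition~\ref{prop: keyest} is needed.
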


\begin{proof}
   Let $p_0$ be a point such that $\sup u_{\mathrm{max}}=u_{\mathrm{max}}(p_0)$. Then, there exists $i\in \{1,2,\dots,r\}$ such that $\sup u_{\mathrm{max}}=u_i(p_0)$. By Proposition \ref{prop: keyest} (or \cite[Proposition 3.2]{Pin23}), we have $|e^fu_i|\le C$ for some constant $C$. Thus, by the second equation of \eqref{eq: directsum}, we have $|\Delta u_i|\le C$.
   Hence, by Green's representation formula, we have $\operatorname{osc} u_i\le C$. Then, we see that
   $$\sup u_{\mathrm{max}}-u_{\mathrm{max}}\le u_i(p_0)-u_i\le C,$$
   which is the desired estimate. 
\end{proof}

Combining the two lemmas above, we obtain a lower bound of $f$. As a result, by \cite[Theorem 1.2]{Pin23}, we obtain an alternative proof of the implication \eqref{item: Gposi}$\Rightarrow$\eqref{item: solv} in Theorem \ref{thm: main} in the setting of a direct sum of line bundles with a direct sum of Hermitian metrics.

\begin{rem}
    If one can prove that $\operatorname{osc} \lambda_{\mathrm{max}}$ is bounded on vector bundles, then Lemma \ref{lem: Gposi} is valid to give an alternative proof on vector bundles. Not only that, the estimate of $\operatorname{osc} \lambda_{\mathrm{max}}$ also gives an alternative proof of Proposition \ref{prop: keyest}, since at the point $p_0$ where $\sup f=f(p_0)$ we have
    $$e^f\ln g\le \frac{\beta}{r}+\Delta f+(1-t)\alpha\le C,$$
    and therefore in the same way as in the proof of \cite[Proposition 3.2]{Pin23} we have
    $$e^f|\ln g|\le Ce^f\lambda_{\mathrm{max}}\le Ce^{\sup f}(\lambda_{\mathrm{max}}-\lambda_{\mathrm{max}}(p_0)+\lambda_{\mathrm{max}}(p_0))\le C.$$
    In conclusion, the oscillation bound for $\lambda_{\mathrm{max}}$ appears to be a powerful estimate in the study of the Demailly system. However, we do not know if this estimate holds in general. 
\end{rem}

\section{Some remarks on further study}\label{sec: rem}

After \cite{Dem21}, both equations of the Demailly system have been modified for the following reasons. 

The first equation of the Demailly system introduced in \cite{Dem21} is formulated to detect dual Nakano positivity, which is strictly stronger than Griffiths positivity and not efficient for attacking Griffiths' conjecture by \cite{LSY}, as remarked in \cite[Section 1]{Dem22}. To include the case of detecting Griffiths positivity, Demailly introduced the operator $\Phi_P$ \cite[equations (2.1), (2.2), (2.3), and (2.17)]{Dem22} and the equation \cite[equation (3.24)]{Dem22}. It is also new that \cite[equation (3.24)]{Dem22} uses the determinant bundle instead of the additional data $\alpha$ as in \cite{Dem21}, motivated by the results of \cite{Ber, LSY, MT}.

Regarding the second equation, Pingali \cite{Pin21} showed that the solution of the second equation of the Demailly system in \cite[Theorem 2.17]{Dem21} has to be conformal to the Hermitian-Einstein metric if it exists, and this uniqueness is too strong in general to attack Griffiths' conjecture. Variants of the second equation
such as \cite[Section 2.19]{Dem21} or \cite[equation (3.24$^\circ$)]{Dem22} can overcome this problem.
Note that these problems do not occur in dimension one, as shown in this paper, since the Hermitian-Einstein metric on a vector bundle $E$ in dimension one is positively curved if $E$ is ample and (dual) Nakano positivity is equivalent to Griffiths positivity in dimension one. 

To study dual Nakano positivity and generalize the result of this paper to an $n$-dimensional Kähler manifold $X$, we can therefore consider, for example, the following system \cite[equations (2.9), (2.16), and Section 2.19]{Dem21}:
\begin{equation}\label{eq: Demhigh}
    \begin{cases}
        \det_{T_X^{1,0}\otimes E^*}\big((^T\sqrt{-1}F_h)/\omega_0+(1-t)\alpha\otimes\operatorname{Id}_{T_X^{1,0}\otimes E^*}\big)=e^{\lambda f} a_0 \\
        \dfrac{\sqrt{-1}F_h^\circ\wedge\omega_t^{n-1}}{\omega_0^n}=-e^f \ln g\\
        \omega_t:=\dfrac{\operatorname{tr}_E(\sqrt{-1}F_h+(1-t)\alpha\omega_0\otimes\operatorname{Id}_E)}{r\alpha+1},
    \end{cases}
\end{equation}
where $(^T\sqrt{-1}F_h)$ is the transpose of $\sqrt{-1}F_h$ in $\operatorname{End}E$-part and $(^T\sqrt{-1}F_h)/\omega_0\in\Gamma(\operatorname{End}(T^{1,0}_X\otimes E))$ is given locally by 
$$\big((^T\sqrt{-1}F_h)/\omega_0\big)^{i\gamma}_{j\delta}:=(\sqrt{-1}F_h)^{j}_{i\delta\bar{\eta}}\ g_0^{\gamma\bar{\eta}}.$$
Here $g_0^{\gamma\bar{\eta}}$ is a local expression of the inverse $g_0^{-1}$ of the Hermitian metric $g_0$ on $T_X$ corresponding to $\omega_0$. Other notations in \eqref{eq: Demhigh} are the same as in \eqref{eq: Dem}. The difficulty in studying \eqref{eq: Demhigh} is that we cannot plug in the second equation to the first equation to obtain a simpler equation like \eqref{eq: Dem'}. Related to \eqref{eq: Demhigh}, \cite{Pan} recently solved a type of the first equation of \eqref{eq: Demhigh} in the conformal class of a Nakano-positive Hermitian metric (that is, only by changing $f$).   

Finally, we also remark that we do not know if the new system in \cite{Dem22} also gives another proof of Griffiths' conjecture in dimension one.

\end{document}